\documentclass[11pt]{amsart}

\usepackage[dvipsnames]{xcolor}
\usepackage[english]{babel}
\usepackage[T1]{fontenc}

\usepackage{amsmath,amssymb,amsthm}
\usepackage{newtxtext}
\usepackage{newtxmath}
\usepackage{microtype}

\usepackage{graphicx}
\usepackage{listings}

\usepackage{tikz}
\usetikzlibrary{
    calc,
    positioning,
    decorations.pathmorphing,
    shapes.geometric
}

\usepackage{hyperref}

\newtheorem{theorem}{Theorem}[section]

\newtheorem{definition}[theorem]{Definition}
\newtheorem{remark}[theorem]{Remark}
\newtheorem{proposition}[theorem]{Proposition}

\DeclareMathOperator{\Aut}{Aut}

\title{The Multiorbital Bivariate Chromatic Polynomial}

\author{Melanie Gerling}
\thanks{Independent Researcher, Germany.
Email: melanie.gerling.math@gmail.com}

\begin{document}

\maketitle

\begin{abstract}
We introduce the multiorbital bivariate chromatic polynomial
\[
F_{\Gamma}(G;x,y)
=
\sum_{H\leq G}\frac{1}{|H|}
\sum_{h\in H}P_{\Gamma/h}(x,y),
\]
which aggregates orbital bivariate chromatic polynomials over the subgroup lattice of a finite group acting on a graph. We derive an equivalent element-wise representation
\[
F_{\Gamma}(G;x,y)
=
\sum_{g\in G}c_G(g)P_{\Gamma/g}(x,y),
\qquad
c_G(g)
=
\sum_{\substack{H\leq G\\g\in H}}\frac{1}{|H|}.
\]
The coefficient function depends only on the cyclic subgroup generated by the group element and is constant on conjugacy classes. This yields corresponding decompositions by cyclic subgroups and conjugacy classes, as well as a natural Möbius-theoretic interpretation. After normalization, the coefficients define a probability distribution on the acting group, giving a probabilistic interpretation of the multiorbital polynomial as an expected quotient polynomial. We further investigate its behaviour under disjoint unions and its specialization to edgeless graphs, where a weighted cycle-index expression is obtained.
\end{abstract}

\section{Introduction}

Graph polynomials provide a powerful way of encoding structural information
about graphs. The classical chromatic polynomial, introduced in early work of Birkhoff \cite{Birkhoff1912} and developed extensively in subsequent work, counts proper vertex colourings and has become a central object in algebraic and enumerative graph theory. A bivariate generalization was introduced by Dohmen, Pönitz,
and Tittmann \cite{DohmenPoenitzTittmann2003}. Their polynomial simultaneously
extends several classical graph polynomials and distinguishes between proper
and improper colours. Further developments of this construction were given by
Averbouch, Godlin, and Makowsky \cite{AverbouchGodlinMakowsky2010}.

Symmetry leads naturally to a second layer of structure. If a finite group
$G$ acts by automorphisms on a graph $\Gamma$, one may ask not only for the
number of colourings of $\Gamma$, but also for colourings modulo the action of
$G$. This viewpoint gives rise to orbital graph polynomials. Cameron and Kayibi
\cite{CameronKayibi2007} studied orbital chromatic and flow polynomials, where
the group action is incorporated into the enumeration of colourings and flows.
More generally, Cameron, Jackson, and Rudd introduced orbital Tutte-type
polynomials associated with group actions on matrices and showed that their
framework encompasses various orbital colouring, flow, and tension
enumerators \cite{CameronJacksonRudd2008}.

A bivariate version of this framework was introduced by Dohmen and
Lange-Geisler \cite{DohmenLangeGeisler2026}. Their orbital bivariate chromatic
polynomial combines the bivariate chromatic polynomial with a group action and
simultaneously generalizes the bivariate chromatic polynomial of
Dohmen, Pönitz, and Tittmann and the orbital chromatic polynomial of Cameron
and Kayibi. In the present paper, we take this construction as our starting
point.

The main purpose of this paper is to study the subgroup-lattice aggregation of orbital bivariate chromatic polynomials and the coefficient system induced by this aggregation. Let $\Gamma$ be a finite graph and let $G\leq\Aut(\Gamma)$. For an element $g\in G$, let $\Gamma/g$ denote the quotient graph obtained from the orbits of the cyclic subgroup $\langle g\rangle$ on $V(\Gamma)$. We define
\begin{equation}\label{eq:intro-multi-orbital}
F_{\Gamma}(G;x,y)
:=
\sum_{H\leq G}
\frac{1}{|H|}
\sum_{h\in H}
P_{\Gamma/h}(x,y).
\end{equation}

After interchanging the finite sums, this becomes
\begin{equation}\label{eq:intro-elementwise}
F_{\Gamma}(G;x,y)
=
\sum_{g\in G}
c_G(g)P_{\Gamma/g}(x,y),
\qquad
c_G(g)
=
\sum_{\langle g\rangle\leq H\leq G}
\frac{1}{|H|}.
\end{equation}

Thus, the subgroup-lattice aggregation induces a natural weight on the cyclic actions of $G$. We show that these weights admit equivalent descriptions in terms of cyclic subgroups and conjugacy classes, arise as a zeta transform on the subgroup lattice, and define, after normalization, a probability distribution on $G$. The resulting polynomial can therefore be viewed both as a subgroup-lattice aggregate of orbital polynomials and as a weighted average of quotient graph polynomials.

The construction is motivated by the observation that a finite group carries
more symmetry information than is visible from any single group average.
Different subgroups impose different collections of symmetry constraints,
while different elements of the same subgroup may induce different quotient
graphs. Aggregating over the subgroup lattice therefore retains both levels of
information: the subgroup structure of $G$ and the quotient structures
arising from individual group elements.

\subsection{Structural perspective and contribution}

Our first structural result gives an equivalent element-wise formulation of
\eqref{eq:intro-multi-orbital}. Define
\[
c_G(g)
:=
\sum_{\substack{H\leq G\\g\in H}}
\frac{1}{|H|}.
\]
Then
\begin{equation}\label{eq:elementwise-expansion}
F_{\Gamma}(G;x,y)
=
\sum_{g\in G}
c_G(g)\,
P_{\Gamma/g}(x,y).
\end{equation}
Thus, the subgroup aggregation can be separated into a purely group-theoretic
coefficient function $c_G$ and graph-theoretic quotient polynomials.

First, $c_G(g)$ depends only on the cyclic subgroup $\langle g\rangle$ and is
constant on conjugacy classes. Consequently, the polynomial admits both a
cyclic-subgroup decomposition and a conjugacy-class decomposition.

Second, the coefficient function is the upper zeta transform of
$H\mapsto |H|^{-1}$ on the subgroup lattice and therefore admits a natural
Möbius inversion.

Third, after normalization, the coefficients define a probability distribution
on the acting group: one may choose a subgroup uniformly at random and then
choose an element uniformly from that subgroup. The resulting distribution is
precisely the normalized coefficient function $c_G(g)$.

Fourth, we investigate the behaviour of $F_{\Gamma}(G;x,y)$ under standard
graph operations and specializations. For disjoint unions preserved by the
group action, the ordinary multiplicativity of the bivariate chromatic
polynomial yields a diagonal multiplicativity relation. For edgeless graphs,
we obtain a weighted cycle-index interpretation, and under the specialization
$y=x$ we recover weighted orbital chromatic polynomials.

The purpose of the present work is therefore not to introduce a new
orbit-counting theorem in the sense of Burnside's lemma. Rather, we apply a
subgroup-lattice aggregation to orbital bivariate graph polynomials and
develop the structural consequences of this operation. The distinction is
important because subgroup-lattice refinements of Burnside's lemma are
classical; in particular, Klass \cite{Klass1976} developed a Möbius-theoretic
method for recovering the distribution of orbit sizes from subgroup
fixed-point data. Our construction uses the subgroup lattice for a different
purpose, namely to weight and aggregate quotient graph polynomials associated
with individual group elements.

The resulting framework connects three structures that are usually treated
separately: the subgroup lattice of the acting group, the cyclic and
conjugacy structure of its elements, and the graph polynomials of the
corresponding quotient graphs.

\section{Related Work}

\subsection{Bivariate chromatic polynomials}

The bivariate chromatic polynomial used throughout this paper originates in
the work of Dohmen, Pönitz, and Tittmann \cite{DohmenPoenitzTittmann2003}.
For a graph $\Gamma$ and parameters $(x,y)$, their polynomial provides a
two-variable refinement of the chromatic polynomial in which proper and
improper colours are distinguished. It simultaneously generalizes several
classical graph polynomials and admits a number of structural descriptions.

Averbouch, Godlin, and Makowsky \cite{AverbouchGodlinMakowsky2010}
subsequently developed an extension of the bivariate chromatic polynomial and
studied further structural and enumerative properties. The present paper applies the bivariate chromatic polynomial to the
quotient graphs $\Gamma/g$.

\subsection{Orbital graph polynomials}

The interaction between graph polynomials and group actions has been studied
in several settings. Cameron and Kayibi \cite{CameronKayibi2007} investigated
orbital chromatic and flow polynomials, incorporating a group action into the
enumeration of colourings and flows.

Cameron, Jackson, and Rudd \cite{CameronJacksonRudd2008} developed a broader
orbit-counting polynomial framework based on dual pairs of matrices and group
actions. Their framework encompasses orbital versions of several classical
enumerative invariants, including colouring, flow, and tension-type
polynomials.

A bivariate version of the orbital framework was introduced by Dohmen and
Lange-Geisler \cite{DohmenLangeGeisler2026}. Their orbital bivariate chromatic
polynomial combines the bivariate colouring framework of Dohmen, Pönitz, and
Tittmann with a group action and thereby connects the bivariate chromatic
polynomial with orbital chromatic enumeration.

In particular, for a finite graph $\Gamma$, a finite group
$G\leq\Aut(\Gamma)$, and admissible parameters $(\lambda,\mu)$, the orbital
bivariate chromatic polynomial is obtained by applying Burnside's lemma to
the action of $G$ on the set of $\mu$-proper $\lambda$-colourings. The
colourings fixed by an element $g\in G$ are precisely those that are constant
on the orbits of $\langle g\rangle$ on $V(\Gamma)$. Thus they correspond to
colourings of the quotient graph $\Gamma/g$, yielding the element-wise
formula
\[
OP_{\Gamma,G}(\lambda,\mu)
=
\frac{1}{|G|}
\sum_{g\in G}
P_{\Gamma/g}(\lambda,\mu).
\]

The quotient polynomial $P_{\Gamma/g}$ therefore arises naturally as the
fixed-point contribution of the element $g$ in the corresponding Burnside
average. Our construction starts from these element-wise contributions but
changes the aggregation procedure. Instead of averaging over one fixed
acting group, we consider the orbital polynomial for every subgroup
$H\leq G$ and subsequently aggregate over the entire subgroup lattice.

\subsection{Burnside's lemma and subgroup-lattice methods}

Burnside's lemma expresses the number of orbits of a finite group action in
terms of the fixed-point numbers of individual group elements. In its
standard form, if a finite group $G$ acts on a finite set $S$, then
\[
|S/G|
=
\frac{1}{|G|}
\sum_{g\in G}
|\operatorname{Fix}(g)|.
\]
This element-wise averaging is the basic mechanism underlying the orbital
bivariate chromatic polynomial described above.

The subgroup lattice provides a finer source of information about a group
action. In particular, Klass \cite{Klass1976} established a
Möbius-theoretic generalization of Burnside's combinatorial lemma. Let a
finite group $G$ act on a finite set $S$, and for a subgroup $K\leq G$
define
\[
F_K
:=
\left|
\left\{
s\in S:
\sigma s=s
\text{ for all }\sigma\in K
\right\}
\right|.
\]
Thus, $F_K$ counts the points fixed by every element of $K$.

If $E_k$ denotes the number of $G$-orbits in $S$ having cardinality $k$,
Klass's formula expresses $E_k$ in terms of the subgroup fixed-point data:
\begin{equation}\label{eq:klass-theorem}
E_k
=
\frac{1}{k}
\sum_{\substack{H\leq G\\ [G:H]=k}}
\sum_{H\leq K\leq G}
\mu(H,K)F_K,
\end{equation}
where $\mu$ denotes the Möbius function of the subgroup lattice of $G$
\cite{Klass1976}.

Klass's theorem and the present construction use the subgroup lattice for
different purposes. In Klass's setting, the input consists of subgroup
fixed-point numbers $\{F_K\}_{K\leq G}$, and Möbius inversion is used to
recover the distribution
\[
\{E_k\}_{k\geq 1}
\]
of orbit sizes. The resulting quantities are orbit-enumeration data for a
$G$-set.

In the present setting, by contrast, the basic data are the quotient graph
polynomials
\[
\{P_{\Gamma/g}(x,y)\}_{g\in G}.
\]
For each subgroup $H\leq G$, we first form the orbital bivariate chromatic
polynomial
\[
OP_{\Gamma,H}(x,y)
=
\frac{1}{|H|}
\sum_{h\in H}
P_{\Gamma/h}(x,y),
\]
and then aggregate these subgroup-indexed polynomials:
\[
F_{\Gamma}(G;x,y)
=
\sum_{H\leq G}
OP_{\Gamma,H}(x,y).
\]

Thus, the subgroup lattice in the present construction is not used to
recover orbit-size information. Rather, it provides the indexing structure
for a family of averages of quotient graph polynomials.

There is nevertheless a direct structural connection with Möbius inversion.
For $g\in G$, the coefficient
\[
c_G(g)
=
\sum_{\substack{H\leq G\\g\in H}}
\frac{1}{|H|}
\]
can be viewed as the value of the upper zeta transform of the function
$H\mapsto |H|^{-1}$ at the cyclic subgroup $\langle g\rangle$. Möbius
inversion therefore gives an intrinsic subgroup-lattice description of the
coefficients. This use of Möbius inversion is structural rather than an
orbit-size enumeration: it describes the weights appearing in the
multiorbital graph polynomial.

\subsection{Position of the present construction}

The preceding works suggest the following hierarchy:
\[
\begin{aligned}
\text{bivariate graph polynomial}
&\longrightarrow
\text{orbital bivariate polynomial}\\
&\longrightarrow
\text{subgroup-aggregated orbital polynomial}.
\end{aligned}
\]

The first step is provided by the bivariate chromatic polynomial of
Dohmen, Pönitz, and Tittmann \cite{DohmenPoenitzTittmann2003} and its
subsequent developments. The second step incorporates a group action through
the orbital framework, in particular the construction of Dohmen and
Lange-Geisler \cite{DohmenLangeGeisler2026}. The third step is the operation
studied in the present paper.

More precisely, for each subgroup $H\leq G$ we consider
\[
OP_{\Gamma,H}(x,y)
=
\frac{1}{|H|}
\sum_{h\in H}
P_{\Gamma/h}(x,y),
\]
and define
\[
F_{\Gamma}(G;x,y)
=
\sum_{H\leq G}
OP_{\Gamma,H}(x,y).
\]
The defining feature is therefore not a new orbit-counting principle, but
the aggregation of existing element-wise orbital contributions over the
entire subgroup lattice.

This distinction also clarifies the relationship with classical
subgroup-lattice refinements of Burnside's lemma. The present construction
does not use subgroup fixed-point data to determine orbit-size
distributions. Instead, it associates a graph polynomial to each cyclic
element action and uses the subgroup lattice to determine how these
polynomials are weighted and aggregated.

The resulting coefficient function
\[
c_G(g)
=
\sum_{\substack{H\leq G\\g\in H}}
\frac{1}{|H|}
\]
provides the bridge between the subgroup-lattice and element-wise
descriptions. Its dependence only on $\langle g\rangle$ leads naturally to
a decomposition by cyclic subgroups, while its invariance under conjugation
permits a decomposition by conjugacy classes. These structural features are
developed in the subsequent sections.

\section{The multiorbital bivariate chromatic polynomial}

We now introduce the multiorbital bivariate chromatic polynomial and derive
its basic structural properties. Throughout this section, $\Gamma$ is a
finite graph and $G\leq\Aut(\Gamma)$ is a finite group acting on $\Gamma$.

We use the definitions and conventions for orbital graph polynomials and
quotient graphs adopted by Cameron and Kayibi
\cite{CameronKayibi2007} and, in the bivariate setting, by Dohmen and
Lange-Geisler \cite{DohmenLangeGeisler2026}.

We first recall the element-wise interpretation of the orbital bivariate
chromatic polynomial. For an element $g\in G$, the quotient graph
$\Gamma/g$ is obtained from the cycles of the permutation induced by $g$
on $V(\Gamma)$. Thus, the vertices of $\Gamma/g$ correspond to the
$\langle g\rangle$-orbits on $V(\Gamma)$. An edge is present between two
distinct orbit-vertices whenever an edge of $\Gamma$ joins the
corresponding orbits. If an edge of $\Gamma$ has both endpoints in the
same orbit, the corresponding quotient vertex carries a loop.

The relevance of this quotient construction follows from Burnside's lemma.
A colouring of $\Gamma$ fixed by $g$ must be constant on every
$\langle g\rangle$-orbit. Hence the fixed colourings of $g$ are naturally
identified with colourings of the quotient graph $\Gamma/g$.

We use the following formulation of the orbital bivariate chromatic
polynomial of Dohmen and Lange-Geisler
\cite{DohmenLangeGeisler2026}.

\begin{proposition}[Orbital bivariate chromatic polynomial]
Let $\Gamma$ be a finite graph and let $G\leq\Aut(\Gamma)$. For admissible
parameters $(\lambda,\mu)$, the orbital bivariate chromatic polynomial
satisfies
\[
OP_{\Gamma,G}(\lambda,\mu)
=
\frac{1}{|G|}
\sum_{g\in G}
P_{\Gamma/g}(\lambda,\mu).
\]
In particular, the polynomial $P_{\Gamma/g}$ is the fixed-point
contribution of the element $g$ in the corresponding Burnside average.
\end{proposition}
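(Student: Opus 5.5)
The plan is to apply Burnside's lemma to the action of $G$ on colourings and then identify each fixed-point set with a set of colourings of a quotient graph. First I fix the setup of Dohmen and Lange-Geisler. For positive integers $\lambda\ge\mu$, let $S_{\lambda,\mu}$ be the set of maps $f\colon V(\Gamma)\to\{1,\dots,\lambda\}$ such that adjacent vertices never receive the same colour from the first $\mu$ colours; these are the $\mu$-proper $\lambda$-colourings. By Dohmen, Pönitz and Tittmann, $P_\Gamma(\lambda,\mu)=|S_{\lambda,\mu}|$. The group $G$ acts on $S_{\lambda,\mu}$ by $(g\cdot f)(v)=f(g^{-1}v)$. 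This action preserves $S_{\lambda,\mu}$ because $g$ is an automorphism, so it maps edges to edges. $OP_{\Gamma,G}(\lambda,\mu)$ is defined as the number of orbits $|S_{\lambda,\mu}/G|$, so Burnside's lemma gives
\[
OP_{\Gamma,G}(\lambda,\mu)=\frac{1}{|G|}\sum_{g\in G}|\operatorname{Fix}(g)|.
\]

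The main step is to show $|\operatorname{Fix}(g)|=P_{\Gamma/g}(\lambda,\mu)$. A colouring $f$ satisfies $g\cdot f=f$ exactly when $f(gv)=f(v)$ for all $v$. This happens exactly when $f$ is constant on each $\langle g\rangle$-orbit. So $f$ factors uniquely as $f=\bar f\circ\pi$, where $\pi\colon V(\Gamma)\to V(\Gamma/g)$ is the orbit map. Next I check that $f$ is $\mu$-proper on $\Gamma$ if and only if $\bar f$ is $\mu$-proper on $\Gamma/g$. Take an edge $uv$ of $\Gamma$ whose endpoints lie in distinct orbits. It maps to an edge between the two orbit-vertices, and $f(u)=\bar f(\pi u)$, $f(v)=\bar f(\pi v)$, so the conditions on $f$ and $\bar f$ are the same. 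Conversely, every quotient edge comes from some such edge of $\Gamma$.

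I expect the loops to be the main obstacle, because the bivariate setting needs a convention for them. An edge with both ends in one orbit forces that orbit's colour to avoid the $\mu$ proper colours, so the loop forbids exactly the proper colours at that vertex. This matches the usual convention that a looped vertex may only take improper colours, which is how Dohmen and Lange-Geisler treat quotients. With this convention the correspondence $f\leftrightarrow\bar f$ is a bijection from $\operatorname{Fix}(g)$ to the $\mu$-proper $\lambda$-colourings of $\Gamma/g$. Multiple edges in the quotient do not affect the count.

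Summing over $g$ gives the identity for all integer $\lambda\ge\mu\ge0$. Both sides are polynomials in $(\lambda,\mu)$, and they agree on infinitely many points in a Zariski-dense set. Hence the identity holds as a polynomial identity and extends to all admissible parameters. The final sentence of the proposition, that $P_{\Gamma/g}$ is the fixed-point contribution of $g$, is then exactly the bijection established above.
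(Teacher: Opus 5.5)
Your proposal is correct and follows the paper's route. The paper, citing Dohmen and Lange-Geisler, applies Burnside's lemma to the $\mu$-proper $\lambda$-colourings and identifies the colourings fixed by $g$ with colourings of $\Gamma/g$; your explicit loop convention (a looped orbit-vertex must receive an improper colour) fills in a detail the paper leaves implicit. The closing Zariski-density step is unnecessary, and it quietly assumes the orbit count is already a polynomial: the statement is only asserted for admissible parameters, where your bijection and Burnside's lemma already give the identity directly.
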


The preceding proposition is the starting point for the present
construction. We retain the element-wise quotient polynomials but change
the way in which they are aggregated.

\begin{definition}[Multiorbital bivariate chromatic polynomial]
Let $\Gamma$ be a finite graph and let $G\leq\Aut(\Gamma)$ be finite.

For $g\in G$, let $\Gamma/g$ denote the quotient graph whose vertices are
the $\langle g\rangle$-orbits on $V(\Gamma)$. If two vertices of $\Gamma$
belong to the same $\langle g\rangle$-orbit and are adjacent in $\Gamma$,
the corresponding quotient vertex carries a loop. Distinct quotient
vertices are adjacent whenever at least one edge of $\Gamma$ joins the
corresponding orbits.

For every subgroup $H\leq G$, define
\begin{equation}\label{eq:orbital-polynomial}
OP_{\Gamma,H}(x,y)
:=
\frac{1}{|H|}
\sum_{h\in H}
P_{\Gamma/h}(x,y).
\end{equation}
This is the orbital bivariate chromatic polynomial introduced by
Dohmen and Lange-Geisler
\cite{DohmenLangeGeisler2026} in their recent preprint, written here
in the variables $(x,y)$.

We define the \emph{multiorbital bivariate chromatic polynomial} of
$\Gamma$ relative to $G$ by
\begin{equation}\label{eq:multi-orbital}
F_{\Gamma}(G;x,y)
:=
\sum_{H\leq G}
OP_{\Gamma,H}(x,y).
\end{equation}
Equivalently,
\begin{equation}\label{eq:multi-orbital-expanded}
F_{\Gamma}(G;x,y)
=
\sum_{H\leq G}
\frac{1}{|H|}
\sum_{h\in H}
P_{\Gamma/h}(x,y).
\end{equation}
\end{definition}

The construction therefore consists of two levels of averaging. For each
fixed subgroup $H$, the polynomial $OP_{\Gamma,H}$ is obtained by averaging
the quotient contributions over the elements of $H$. The multiorbital
polynomial then aggregates these orbital polynomials over the complete
subgroup lattice of $G$.

\begin{proposition}[Element-wise expansion]
For every finite group $G$ acting on $\Gamma$,
\begin{equation}\label{eq:coefficient-expansion}
F_{\Gamma}(G;x,y)
=
\sum_{g\in G}
c_G(g)\,
P_{\Gamma/g}(x,y),
\end{equation}
where
\begin{equation}\label{eq:coefficient}
c_G(g)
:=
\sum_{\substack{H\leq G\\g\in H}}
\frac{1}{|H|}.
\end{equation}
\end{proposition}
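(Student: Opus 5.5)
The identity is a finite interchange of summation, so the plan is to rewrite the double sum in \eqref{eq:multi-orbital-expanded} as a single sum over pairs. First I record that everything is finite: $G$ is finite, so it has finitely many subgroups $H\leq G$, and each $H$ has finitely many elements. Each $P_{\Gamma/h}(x,y)$ is an element of the polynomial ring $\mathbb{Q}[x,y]$, and the scalars $1/|H|$ are rational. Therefore all rearrangements below are rearrangements of finite sums in a commutative ring, and no convergence issues arise.

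Starting from \eqref{eq:multi-orbital-expanded}, I write
\[
F_{\Gamma}(G;x,y)=\sum_{(H,h)\in\mathcal{I}}\frac{1}{|H|}\,P_{\Gamma/h}(x,y),
\qquad
\mathcal{I}:=\{(H,h): H\leq G,\ h\in H\}.
\]
I then regroup the index set $\mathcal{I}$ according to its second coordinate. For fixed $g\in G$, the pairs in $\mathcal{I}$ with second coordinate $g$ are exactly the pairs $(H,g)$ with $H\leq G$ and $g\in H$. Since every $h$ occurring in a pair lies in $G$, $\mathcal{I}$ is the disjoint union over $g\in G$ of these fibres. The summand $P_{\Gamma/g}(x,y)$ depends only on $g$, so it factors out of each fibre, giving
\[
F_{\Gamma}(G;x,y)=\sum_{g\in G}\Bigl(\sum_{\substack{H\leq G\\ g\in H}}\frac{1}{|H|}\Bigr)P_{\Gamma/g}(x,y)=\sum_{g\in G}c_G(g)\,P_{\Gamma/g}(x,y),
\]
which is \eqref{eq:coefficient-expansion} with $c_G$ as in \eqref{eq:coefficient}. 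Every fibre is nonempty, since it contains $H=\langle g\rangle$ and $H=G$, so $c_G(g)$ is a genuine positive rational.

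The only point needing care, rather than a real obstacle, is that the quotient polynomial attached to $h$ is the same regardless of which subgroup $H$ the element $h$ is viewed in. This holds because $\Gamma/h$ is defined via the orbits of $\langle h\rangle$ alone, as in the Definition. Without this, the factorization in the previous step would fail.
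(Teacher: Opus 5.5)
Your proposal is correct and follows the same route as the paper, which simply interchanges the two finite sums in \eqref{eq:multi-orbital-expanded} and reads off $c_G(g)$ as the coefficient of $P_{\Gamma/g}(x,y)$. Your version just makes this regrouping explicit via the index set of pairs $(H,h)$ and its fibres over each $g\in G$.
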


\begin{proof}
Starting from \eqref{eq:multi-orbital-expanded}, we interchange the two
finite sums:
\[
\begin{aligned}
F_{\Gamma}(G;x,y)
&=
\sum_{H\leq G}
\frac{1}{|H|}
\sum_{h\in H}
P_{\Gamma/h}(x,y)\\
&=
\sum_{g\in G}
\left(
\sum_{\substack{H\leq G\\g\in H}}
\frac{1}{|H|}
\right)
P_{\Gamma/g}(x,y).
\end{aligned}
\]
The expression in parentheses is precisely $c_G(g)$.
\end{proof}

The element-wise representation is the central structural decomposition of
the multiorbital polynomial. It separates the group-theoretic contribution,
encoded by $c_G$, from the graph-theoretic contribution, encoded by the
quotient polynomials $P_{\Gamma/g}$.

\subsection{Coefficient structure and cyclic subgroups}

The condition $g\in H$ can be expressed in terms of the cyclic subgroup
generated by $g$:
\[
g\in H
\quad\Longleftrightarrow\quad
\langle g\rangle\leq H.
\]
Consequently,
\begin{equation}\label{eq:cyclic-coefficient}
c_G(g)
=
\sum_{\langle g\rangle\leq H\leq G}
\frac{1}{|H|}.
\end{equation}

\begin{proposition}
The coefficient $c_G(g)$ depends only on the cyclic subgroup
$\langle g\rangle$. In particular, if
\[
\langle g\rangle=\langle k\rangle,
\]
then
\[
c_G(g)=c_G(k).
\]
\end{proposition}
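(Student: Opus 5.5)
The plan is to read the claim directly off the reformulation \eqref{eq:cyclic-coefficient}. The first step is to justify the equivalence $g\in H \Leftrightarrow \langle g\rangle\le H$ for a subgroup $H\le G$. If $g\in H$, then closure of $H$ under products and inverses gives $g^n\in H$ for every $n\in\mathbb{Z}$, so $\langle g\rangle\le H$. Conversely, $g\in\langle g\rangle\subseteq H$.

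With this in hand, the index set $\{H\le G : g\in H\}$ in \eqref{eq:coefficient} coincides with the upper interval $\{H\le G : \langle g\rangle\le H\}$ in the subgroup lattice. Hence
\[
c_G(g)=\sum_{\langle g\rangle\le H\le G}\frac{1}{|H|}.
\]
The right-hand side is a function of the subgroup $\langle g\rangle$ alone. Explicitly, define $\gamma(C):=\sum_{C\le H\le G}|H|^{-1}$ for any subgroup $C\le G$. Then $c_G(g)=\gamma(\langle g\rangle)$.

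The second assertion then follows at once. If $\langle g\rangle=\langle k\rangle$, the two index sets are literally the same set of subgroups, and the summands $1/|H|$ agree term by term, so $c_G(g)=\gamma(\langle g\rangle)=\gamma(\langle k\rangle)=c_G(k)$.

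There is no genuine obstacle here. The only point needing care is the equivalence $g\in H\Leftrightarrow\langle g\rangle\le H$, which is the elementary subgroup-closure fact above. It might be worth remarking that, since $\langle g^m\rangle=\langle g\rangle$ whenever $\gcd(m,\operatorname{ord}(g))=1$, all generators of the same cyclic subgroup receive equal weight; this observation prepares the cyclic-subgroup decomposition that follows.
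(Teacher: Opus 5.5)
Your proof is correct and follows essentially the paper's route: the paper also reads the claim directly off the reformulation $c_G(g)=\sum_{\langle g\rangle\leq H\leq G}\frac{1}{|H|}$, which rests on the equivalence $g\in H\Leftrightarrow\langle g\rangle\leq H$. The only difference is that you spell out the elementary closure argument behind that equivalence, which the paper simply states before the proposition.
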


\begin{proof}
Equation \eqref{eq:cyclic-coefficient} shows that $c_G(g)$ is determined
entirely by the collection of subgroups containing $\langle g\rangle$.
Hence it depends only on $\langle g\rangle$.
\end{proof}

Let
\[
\mathcal{C}(G)
:=
\{C\leq G:C\text{ is cyclic}\}
\]
denote the set of cyclic subgroups of $G$. For $C\in\mathcal{C}(G)$, define
\[
c_G(C)
:=
\sum_{C\leq H\leq G}\frac{1}{|H|}.
\]
If $C=\langle g\rangle$, then $c_G(C)=c_G(g)$.

If $g$ and $h$ are generators of the same cyclic subgroup $C$, then
\[
\langle g\rangle=\langle h\rangle=C.
\]
Consequently, $g$ and $h$ induce the same orbit partition on
$V(\Gamma)$, and hence the corresponding quotient graphs are isomorphic.
Since $C$ has exactly $\varphi(|C|)$ generators, we obtain the following
decomposition.

\begin{proposition}[Cyclic-subgroup decomposition]
With the notation above,
\begin{equation}\label{eq:cyclic-subgroup-expansion}
F_{\Gamma}(G;x,y)
=
\sum_{C\in\mathcal{C}(G)}
\varphi(|C|)\,
c_G(C)\,
P_{\Gamma/C}(x,y),
\end{equation}
where, for any generator $g$ of $C$,
\[
P_{\Gamma/C}(x,y)
:=
P_{\Gamma/g}(x,y).
\]
\end{proposition}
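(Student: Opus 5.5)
The plan is to start from the element-wise expansion \eqref{eq:coefficient-expansion} and regroup the sum over $g\in G$ according to the cyclic subgroup each element generates. Every $g\in G$ generates exactly one cyclic subgroup $\langle g\rangle\in\mathcal{C}(G)$. Hence $G$ is the disjoint union
\[
G=\bigsqcup_{C\in\mathcal{C}(G)}\operatorname{gen}(C),
\qquad
\operatorname{gen}(C):=\{g\in G:\langle g\rangle=C\}.
\]
Since $C$ is cyclic of order $|C|$, the set $\operatorname{gen}(C)$ has exactly $\varphi(|C|)$ elements. This gives
\[
F_{\Gamma}(G;x,y)=\sum_{C\in\mathcal{C}(G)}\ \sum_{g\in\operatorname{gen}(C)} c_G(g)\,P_{\Gamma/g}(x,y).
\]

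Next I will show that the summand is constant on each block $\operatorname{gen}(C)$. The coefficient is handled by the preceding proposition, which says $c_G(g)$ depends only on $\langle g\rangle$. By \eqref{eq:cyclic-coefficient}, $c_G(g)=c_G(C)$ for every $g\in\operatorname{gen}(C)$.

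For the polynomial factor, I need $P_{\Gamma/g}$ to be the same for all generators $g$ of $C$. The vertex set of $\Gamma/g$ is the set of $\langle g\rangle$-orbits, which are exactly the $C$-orbits. The edges and loops of $\Gamma/g$ are determined solely by this orbit partition together with $\Gamma$: there is a loop when an edge lies inside an orbit, and an edge between two distinct orbits when some edge of $\Gamma$ joins them. So the quotient graphs $\Gamma/g$ for $g\in\operatorname{gen}(C)$ are literally equal as graphs, not merely isomorphic. Consequently $P_{\Gamma/C}:=P_{\Gamma/g}$ is well defined, independent of the choice of generator.

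The inner sum then collapses to $\varphi(|C|)\,c_G(C)\,P_{\Gamma/C}(x,y)$, which yields \eqref{eq:cyclic-subgroup-expansion}. The only point needing any care is the well-definedness of $P_{\Gamma/C}$: the quotient must be determined by the orbit partition alone. This follows directly from the definition of the quotient graph, and the rest is a finite regrouping of sums.
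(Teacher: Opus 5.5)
Your proposal is correct and follows essentially the same route as the paper: it groups the element-wise expansion by the cyclic subgroup $C=\langle g\rangle$. It then uses that $c_G$ depends only on $\langle g\rangle$ and that all $\varphi(|C|)$ generators of $C$ induce the same orbit partition, so each block collapses to $\varphi(|C|)\,c_G(C)\,P_{\Gamma/C}(x,y)$. Where the paper says the quotients for different generators are isomorphic, you note that they are literally equal. That is a slightly sharper but equivalent way of justifying that $P_{\Gamma/C}$ is well defined.
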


\begin{proof}
Group the elements $g\in G$ in
\eqref{eq:coefficient-expansion} according to the cyclic subgroup
$C=\langle g\rangle$. For a fixed cyclic subgroup $C$, all generators of
$C$ have the same coefficient $c_G(C)$ and induce the same orbit
partition. Thus they give isomorphic quotient graphs and hence the same
bivariate chromatic polynomial. Since $C$ has
$\varphi(|C|)$ generators, its total contribution is
\[
\varphi(|C|)\,
c_G(C)\,
P_{\Gamma/C}(x,y).
\]
Summing over all cyclic subgroups gives
\eqref{eq:cyclic-subgroup-expansion}.
\end{proof}

The coefficient of the identity element has a particularly simple form:
\[
c_G(1)
=
\sum_{H\leq G}\frac{1}{|H|}.
\]
Since every subgroup contains the identity, while only some subgroups
contain a given non-identity element, we have
\[
c_G(g)\leq c_G(1)
\qquad
\text{for every }g\in G.
\]

\subsection{Conjugacy and quotient graphs}

The coefficient function is compatible with conjugation.

\begin{proposition}
Let $\Gamma$ be a finite graph and let $G\leq\Aut(\Gamma)$. Then
$c_G$ is constant on the conjugacy classes of $G$. Moreover, if
$g,h\in G$ are conjugate, then
\[
\Gamma/g\cong\Gamma/h
\]
and consequently
\[
P_{\Gamma/g}(x,y)
=
P_{\Gamma/h}(x,y).
\]
\end{proposition}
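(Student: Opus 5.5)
Both claims follow from the fact that conjugation by a fixed element is simultaneously an automorphism of the subgroup lattice of $G$ and, through the action on $\Gamma$, a graph automorphism that transports orbit partitions. Suppose $h=aga^{-1}$ with $a\in G$.

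\textbf{Coefficients.} Let $\iota_a\colon H\mapsto aHa^{-1}$. This is a bijection from the set of subgroups of $G$ to itself, with inverse $\iota_{a^{-1}}$, and it preserves orders: $|aHa^{-1}|=|H|$. It also preserves the membership condition, since
\[
g\in H \quad\Longleftrightarrow\quad aga^{-1}\in aHa^{-1}.
\]
Hence $\iota_a$ restricts to a bijection from $\{H\leq G: g\in H\}$ onto $\{K\leq G: h\in K\}$. Reindexing the sum \eqref{eq:coefficient} along this bijection gives $c_G(g)=c_G(h)$. Equivalently, in terms of \eqref{eq:cyclic-coefficient}, note that $a\langle g\rangle a^{-1}=\langle h\rangle$.

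\textbf{Quotient graphs.} Here $a$ acts on $\Gamma$ as a graph automorphism $\alpha$ of $V(\Gamma)$. Since $h^k=ag^ka^{-1}$ for every $k$, we have
\[
\alpha\bigl(\langle g\rangle v\bigr)=\langle h\rangle\,\alpha(v).
\]
So $\alpha$ carries each $\langle g\rangle$-orbit onto a $\langle h\rangle$-orbit, and it induces a bijection $\bar\alpha$ from $V(\Gamma/g)$ to $V(\Gamma/h)$. Because $\alpha$ maps edges to edges and non-edges to non-edges, we can check the quotient structure directly:
\begin{itemize}
\item An edge of $\Gamma$ inside a $\langle g\rangle$-orbit $O$ maps to an edge inside $\alpha(O)$, so loops correspond under $\bar\alpha$.
\item An edge joining distinct orbits $O,O'$ maps to an edge joining $\alpha(O),\alpha(O')$, so adjacency between distinct quotient vertices corresponds.
\end{itemize}
Applying the same argument to $\alpha^{-1}$ gives the converse implications. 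Therefore $\bar\alpha$ is an isomorphism $\Gamma/g\cong\Gamma/h$, loops included.

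\textbf{Polynomials.} The bivariate chromatic polynomial is an isomorphism invariant of graphs with loops. A loop simply forbids the proper colours at that vertex, and this condition is preserved under isomorphism. Hence $P_{\Gamma/g}(x,y)=P_{\Gamma/h}(x,y)$.

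The only step needing any care is the quotient isomorphism, specifically confirming that the loop convention of the Definition is respected. This is routine once $\alpha$ is seen to be an automorphism. The rest is a relabelling argument.
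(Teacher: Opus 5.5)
Your proposal is correct and follows the paper's proof essentially step for step. You reindex the sum defining $c_G$ along the order-preserving bijection $H\mapsto aHa^{-1}$, which carries $\{H\leq G: g\in H\}$ onto $\{K\leq G: h\in K\}$, and then use that the automorphism $a$ maps $\langle g\rangle$-orbits onto $\langle h\rangle$-orbits to get $\Gamma/g\cong\Gamma/h$. Your explicit checks of $\alpha(\langle g\rangle v)=\langle h\rangle\alpha(v)$, of the loop convention, and of the converse direction via $\alpha^{-1}$ only fill in details the paper leaves implicit.
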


\begin{proof}
Suppose that
\[
h=kgk^{-1}
\]
for some $k\in G$. Conjugation by $k$ gives a bijection
\[
H\longmapsto kHk^{-1}
\]
on the subgroup lattice of $G$ and preserves subgroup orders. Moreover,
\[
g\in H
\quad\Longleftrightarrow\quad
h\in kHk^{-1}.
\]
Therefore
\[
\begin{aligned}
c_G(h)
&=
\sum_{\substack{K\leq G\\h\in K}}
\frac{1}{|K|}\\
&=
\sum_{\substack{H\leq G\\g\in H}}
\frac{1}{|kHk^{-1}|}\\
&=
c_G(g).
\end{aligned}
\]

Since $h=kgk^{-1}$, the automorphism $k$ maps
$\langle g\rangle$-orbits on $V(\Gamma)$ onto
$\langle h\rangle$-orbits. It therefore induces a graph isomorphism
\[
\Gamma/g\cong\Gamma/h.
\]
The equality of the corresponding bivariate chromatic polynomials follows.
\end{proof}

\begin{proposition}[Conjugacy-class decomposition]
Let $[g]$ denote the conjugacy class of $g\in G$. Then
\begin{equation}\label{eq:conjugacy-class-expansion}
F_{\Gamma}(G;x,y)
=
\sum_{[g]\subseteq G}
|[g]|\,c_G(g)\,P_{\Gamma/g}(x,y).
\end{equation}
\end{proposition}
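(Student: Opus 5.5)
The plan is to start from the element-wise expansion \eqref{eq:coefficient-expansion} and regroup the sum over $g\in G$ according to conjugacy classes. The conjugacy classes of $G$ partition $G$, so for any function $\Phi$ on $G$ we have
\[
\sum_{g\in G}\Phi(g)=\sum_{[g]\subseteq G}\ \sum_{h\in[g]}\Phi(h),
\]
where the outer sum runs over the distinct classes, each represented once by a chosen representative $g$. I would apply this with $\Phi(h)=c_G(h)\,P_{\Gamma/h}(x,y)$.

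Next I would invoke the preceding proposition on conjugacy and quotient graphs. For $h\in[g]$ it gives both $c_G(h)=c_G(g)$ and $\Gamma/h\cong\Gamma/g$. Since the bivariate chromatic polynomial is an isomorphism invariant of graphs, including loops, we also get $P_{\Gamma/h}(x,y)=P_{\Gamma/g}(x,y)$. Hence every summand in the inner sum equals $c_G(g)\,P_{\Gamma/g}(x,y)$. The inner sum has $|[g]|$ terms, so it collapses to $|[g]|\,c_G(g)\,P_{\Gamma/g}(x,y)$, and summing over the classes yields \eqref{eq:conjugacy-class-expansion}.

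There is no genuine obstacle here. The only point needing a remark is well-definedness: the right-hand side must not depend on the choice of class representative. This follows from the same invariance facts, so I would state it explicitly. Optionally, I would note that $|[g]|=[G:C_G(g)]$ to connect with standard class-equation notation.
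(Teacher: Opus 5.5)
Your proposal is correct and follows the paper's proof: you regroup the element-wise expansion \eqref{eq:coefficient-expansion} by conjugacy classes and use the preceding proposition that both $c_G$ and $P_{\Gamma/g}$ are constant on each class. Your explicit check that the result does not depend on the choice of class representative is a small addition the paper leaves implicit. If you include the optional identity $|[g]|=[G:C_G(g)]$, use a different symbol for the centralizer so it does not clash with the coefficient $c_G$.
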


\begin{proof}
By the preceding proposition, both $c_G(g)$ and
$P_{\Gamma/g}(x,y)$ are constant on conjugacy classes. Grouping the terms
in \eqref{eq:coefficient-expansion} by conjugacy class therefore gives
\eqref{eq:conjugacy-class-expansion}.
\end{proof}

In particular, if $G=S_n$ with its natural permutation action, then the
conjugacy classes are indexed by cycle type. The coefficient function
$c_G(g)$ consequently depends only on the cycle type of $g$. The quotient
polynomial, however, also depends on the particular action of $G$ on
$\Gamma$.

\subsection{Example: A path with a $C_2$-action}

Let $\Gamma=P_3$ be the path on vertices $1,2,3$, and let
\[
G=C_2=\{e,g\}
\]
act by the reflection
\[
g:1\leftrightarrow3,
\qquad
g(2)=2.
\]

For the identity element $e$, no vertices are identified, so
\[
\Gamma/e=\Gamma=P_3.
\]
The bivariate chromatic polynomial of $P_3$ is
\[
P_{\Gamma/e}(x,y)
=
(x-y)x^2+y(x-1)^2.
\]

For the nontrivial element $g$, the orbits are
\[
\{1,3\}
\qquad\text{and}\qquad
\{2\}.
\]
Thus the vertices $1$ and $3$ are identified. The two edges
$12$ and $23$ induce two edges between the resulting quotient vertices.
As these parallel edges impose the same adjacency condition, the quotient
has the underlying simple graph $K_2$. Hence
\[
P_{\Gamma/g}(x,y)
=
x^2-y.
\]

The subgroups of $C_2$ are $\{e\}$ and $C_2$. Therefore
\[
OP_{\Gamma,\{e\}}(x,y)
=
P_{\Gamma/e}(x,y),
\]
while
\[
OP_{\Gamma,C_2}(x,y)
=
\frac12
\left(
P_{\Gamma/e}(x,y)
+
P_{\Gamma/g}(x,y)
\right).
\]
Consequently,
\[
\begin{aligned}
F_{\Gamma}(C_2;x,y)
&=
OP_{\Gamma,\{e\}}(x,y)
+
OP_{\Gamma,C_2}(x,y)\\
&=
\frac32
\bigl((x-y)x^2+y(x-1)^2\bigr)
+
\frac12(x^2-y).
\end{aligned}
\]

The element-wise coefficients are
\[
c_{C_2}(e)
=
1+\frac12
=
\frac32
\]
and
\[
c_{C_2}(g)
=
\frac12.
\]
Thus
\[
F_{\Gamma}(C_2;x,y)
=
\frac32P_{\Gamma/e}(x,y)
+
\frac12P_{\Gamma/g}(x,y),
\]
in agreement with the subgroup-wise definition.

This example shows explicitly that different elements of the acting group
can produce different quotient graphs and hence different bivariate
chromatic contributions.

\subsection{Disjoint unions and diagonal multiplicativity}

We next consider disjoint unions. Let
\[
\Gamma=\Gamma_1\sqcup\Gamma_2
\]
and suppose that the action of $G$ preserves both components. Thus, every
$\langle g\rangle$-orbit is contained entirely in either
$V(\Gamma_1)$ or $V(\Gamma_2)$. Consequently,
\[
(\Gamma_1\sqcup\Gamma_2)/g
=
(\Gamma_1/g)\sqcup(\Gamma_2/g).
\]

Since the bivariate chromatic polynomial is multiplicative with respect to
disjoint unions,
\[
P_{(\Gamma_1\sqcup\Gamma_2)/g}(x,y)
=
P_{\Gamma_1/g}(x,y)
P_{\Gamma_2/g}(x,y).
\]
This gives the following relation.

\begin{proposition}[Diagonal multiplicativity]
Let
\[
\Gamma=\Gamma_1\sqcup\Gamma_2
\]
and suppose that $G$ preserves both components. Then
\begin{equation}\label{eq:diagonal-multiplicativity}
F_{\Gamma_1\sqcup\Gamma_2}(G;x,y)
=
\sum_{g\in G}
c_G(g)\,
P_{\Gamma_1/g}(x,y)\,
P_{\Gamma_2/g}(x,y).
\end{equation}
\end{proposition}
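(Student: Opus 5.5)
The plan is to substitute the quotient factorization into the element-wise expansion of the Element-wise expansion proposition. Apply \eqref{eq:coefficient-expansion} to the graph $\Gamma=\Gamma_1\sqcup\Gamma_2$, which is legitimate because $G\leq\Aut(\Gamma)$. This gives
\[
F_{\Gamma_1\sqcup\Gamma_2}(G;x,y)=\sum_{g\in G}c_G(g)\,P_{(\Gamma_1\sqcup\Gamma_2)/g}(x,y).
\]
The coefficients $c_G(g)$ are purely group-theoretic. They do not depend on the graph at all, so nothing about them needs to be checked.

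The main step is to justify the quotient identity $(\Gamma_1\sqcup\Gamma_2)/g=(\Gamma_1/g)\sqcup(\Gamma_2/g)$ for every $g\in G$. I would argue it directly from the definition of the quotient graph.

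\emph{Vertices.} Since $g$ preserves $V(\Gamma_1)$ and $V(\Gamma_2)$, each $\langle g\rangle$-orbit lies in exactly one component. The orbit set is therefore the disjoint union of the orbit sets of the restricted actions. Each restriction $g|_{\Gamma_i}$ is an automorphism of $\Gamma_i$, so $\Gamma_i/g$ is well defined.

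\emph{Loops.} A loop at an orbit comes from an edge of $\Gamma$ with both ends in that orbit. Such an edge lies inside a single component.

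\emph{Edges between distinct orbits.} Two orbits lying in different components are never joined, because $\Gamma$ has no edges between $\Gamma_1$ and $\Gamma_2$. Two orbits in the same component $\Gamma_i$ are joined exactly when some edge of $\Gamma_i$ joins them.

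Together these show the quotient splits as a disjoint union, with loops and adjacencies matching on each side.

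Next, invoke multiplicativity of the bivariate chromatic polynomial of Dohmen, Pönitz, and Tittmann over disjoint unions, including graphs with loops. This holds because a colouring of a disjoint union is an independent pair of colourings of the parts, and the $\mu$-properness condition is local to edges and loops. It yields $P_{(\Gamma_1\sqcup\Gamma_2)/g}=P_{\Gamma_1/g}P_{\Gamma_2/g}$. Substituting this termwise into the sum gives \eqref{eq:diagonal-multiplicativity}.

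The only point needing care, rather than real difficulty, is that multiplicativity must be applied to quotient graphs, which may carry loops. I would note that loops constrain only the colour of a single vertex, so the product argument is unaffected.
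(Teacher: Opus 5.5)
Your proposal is correct and follows the same route as the paper: apply the element-wise expansion to $\Gamma_1\sqcup\Gamma_2$, use $(\Gamma_1\sqcup\Gamma_2)/g=(\Gamma_1/g)\sqcup(\Gamma_2/g)$, and substitute the multiplicativity of $P$ over disjoint unions termwise. The only difference is that you verify the quotient splitting (vertices, loops, edges) and the validity of multiplicativity for graphs with loops, which the paper simply asserts.
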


\begin{proof}
For every $g\in G$,
\[
(\Gamma_1\sqcup\Gamma_2)/g
=
(\Gamma_1/g)\sqcup(\Gamma_2/g).
\]
Hence
\[
P_{(\Gamma_1\sqcup\Gamma_2)/g}(x,y)
=
P_{\Gamma_1/g}(x,y)
P_{\Gamma_2/g}(x,y).
\]
Substituting this identity into
\eqref{eq:coefficient-expansion} gives
\eqref{eq:diagonal-multiplicativity}.
\end{proof}

The relation is diagonal because the same group element $g$ acts on both
components. It should therefore not be confused with ordinary
multiplicativity. Indeed,
\[
F_{\Gamma_1}(G;x,y)F_{\Gamma_2}(G;x,y)
=
\sum_{g,h\in G}
c_G(g)c_G(h)\,
P_{\Gamma_1/g}(x,y)
P_{\Gamma_2/h}(x,y),
\]
where the two group elements are chosen independently. Thus, in general,
\[
F_{\Gamma_1\sqcup\Gamma_2}(G;x,y)
\neq
F_{\Gamma_1}(G;x,y)F_{\Gamma_2}(G;x,y).
\]

\subsection{Möbius inversion on the subgroup lattice}
\label{subsec:moebius-inversion}

The coefficient function has a natural interpretation in terms of the
subgroup lattice of $G$. This provides a convenient Möbius-theoretic
description of the weights occurring in the multiorbital polynomial.

For every subgroup $K\leq G$, define
\[
a(K):=\frac{1}{|K|}
\]
and
\[
b_G(K)
:=
\sum_{K\leq H\leq G}a(H)
=
\sum_{K\leq H\leq G}\frac{1}{|H|}.
\]
If $K=\langle g\rangle$, then
\[
c_G(g)=b_G(K).
\]

Let
$\mu_{\mathcal{H}(G)}$ denote the Möbius function of the finite subgroup
lattice $\mathcal{H}(G)$.

\begin{proposition}[Möbius inversion]
For every subgroup $K\leq G$,
\begin{equation}\label{eq:moebius-inversion}
\frac{1}{|K|}
=
\sum_{K\leq H\leq G}
\mu_{\mathcal{H}(G)}(K,H)\,
b_G(H).
\end{equation}
\end{proposition}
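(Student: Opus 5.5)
This is the standard Möbius inversion on the finite poset $\mathcal{H}(G)$ of subgroups of $G$ ordered by inclusion. The plan is to verify the inversion directly by substituting the definition of $b_G$ into the right-hand side of \eqref{eq:moebius-inversion}, rather than citing the general inversion theorem as a black box. That way the argument is self-contained and uses only the defining property of the Möbius function.

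First, fix $K\leq G$ and substitute
\[
b_G(H)=\sum_{H\leq L\leq G}\frac{1}{|L|}
\]
into the right-hand side. This gives the double sum
\[
\sum_{K\leq H\leq G}\ \sum_{H\leq L\leq G}\mu_{\mathcal{H}(G)}(K,H)\,\frac{1}{|L|}.
\]
Since all sums are finite, I interchange the order of summation. The pairs $(H,L)$ with $K\leq H\leq L\leq G$ are regrouped by $L$, which yields
\[
\sum_{K\leq L\leq G}\frac{1}{|L|}\sum_{K\leq H\leq L}\mu_{\mathcal{H}(G)}(K,H).
\]

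Second, I invoke the defining recursion of the Möbius function on the interval $[K,L]$ of the subgroup lattice. It states that $\sum_{K\leq H\leq L}\mu_{\mathcal{H}(G)}(K,H)=\delta_{K,L}$, that is, $\mu(K,K)=1$ and the sum vanishes whenever $K<L$. Hence only the term $L=K$ survives, and the expression collapses to $1/|K|=a(K)$, as claimed.

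The only point requiring care is the index bookkeeping in the interchange of sums. The correct vanishing identity is the one summing over the \emph{upper} variable $H$ with the lower endpoint $K$ fixed. One should check that this is indeed the standard definition, $\sum_{K\leq H\leq L}\mu(K,H)=\delta_{K,L}$, rather than its dual $\sum_{K\leq H\leq L}\mu(H,L)=\delta_{K,L}$. Both hold in any finite poset, since $\mu$ is the two-sided inverse of $\zeta$ in the incidence algebra. Either can therefore be cited, but the first is the one that matches the order of summation above. No group-theoretic input beyond the finiteness of $\mathcal{H}(G)$ is needed.
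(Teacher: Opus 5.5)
Your argument is correct and is essentially the paper's proof. The paper notes that $b_G$ is the upper zeta transform of $a(K)=1/|K|$ and cites Möbius inversion on the finite poset $\mathcal{H}(G)$; you prove that inversion inline by substituting $b_G$, interchanging the sums, and applying $\sum_{K\leq H\leq L}\mu_{\mathcal{H}(G)}(K,H)=\delta_{K,L}$, which is the right identity for your order of summation.
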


\begin{proof}
By definition,
\[
b_G(K)
=
\sum_{K\leq H\leq G}a(H).
\]
Thus $b_G$ is the upper zeta transform of the function $a$ on the
subgroup lattice. Möbius inversion on the finite poset
$\mathcal{H}(G)$ gives
\[
a(K)
=
\sum_{K\leq H\leq G}
\mu_{\mathcal{H}(G)}(K,H)b_G(H),
\]
which is precisely \eqref{eq:moebius-inversion}.
\end{proof}

Thus
\[
c_G(g)
=
\sum_{\langle g\rangle\leq H\leq G}\frac{1}{|H|}
\]
is the value of the upper zeta transform at the cyclic subgroup
$\langle g\rangle$. The inversion is naturally formulated on subgroups
rather than on individual group elements, reflecting the fact that
$c_G(g)$ depends only on $\langle g\rangle$.

\subsubsection{Example: The cyclic group $C_p$}

Let $G=C_p$ be cyclic of prime order $p$. Its subgroup lattice consists
of the trivial subgroup
\[
1:=\{e\}
\]
and $C_p$. The Möbius function is
\[
\mu(1,1)=1,
\qquad
\mu(1,C_p)=-1,
\qquad
\mu(C_p,C_p)=1.
\]

For the identity element,
\[
c_{C_p}(e)
=
1+\frac1p.
\]
For every non-identity element $g\in C_p$,
\[
\langle g\rangle=C_p,
\]
and hence
\[
c_{C_p}(g)=\frac1p.
\]

Thus
\[
b_{C_p}(1)=1+\frac1p,
\qquad
b_{C_p}(C_p)=\frac1p.
\]
Möbius inversion gives
\[
\begin{aligned}
\frac1{|1|}
&=
\mu(1,1)b_{C_p}(1)
+
\mu(1,C_p)b_{C_p}(C_p)\\
&=
\left(1+\frac1p\right)-\frac1p\\
&=
1,
\end{aligned}
\]
while
\[
\frac1{|C_p|}
=
\mu(C_p,C_p)b_{C_p}(C_p)
=
\frac1p.
\]

\subsubsection{Example: The symmetric group $S_3$}

Let $G=S_3$. Its subgroup lattice consists of the trivial subgroup,
three subgroups of order $2$, one subgroup of order $3$, and the whole
group $S_3$.

Let
\[
T=\langle(1\,2)\rangle,
\qquad
C=\langle(1\,2\,3)\rangle.
\]
Then
\[
|T|=2,
\qquad
|C|=3.
\]

The upper zeta transform takes the values
\[
b_{S_3}(1)
=
1+3\cdot\frac12+\frac13+\frac16
=
3,
\]
\[
b_{S_3}(T)
=
\frac12+\frac16
=
\frac23,
\]
\[
b_{S_3}(C)
=
\frac13+\frac16
=
\frac12,
\]
and
\[
b_{S_3}(S_3)
=
\frac16.
\]

Consequently,
\[
c_{S_3}(e)=3,
\]
\[
c_{S_3}(t)=\frac23
\qquad
\text{for every transposition }t,
\]
and
\[
c_{S_3}(c)=\frac12
\qquad
\text{for every $3$-cycle }c.
\]

For the interval $[T,S_3]$ there are no subgroups strictly between $T$
and $S_3$. Hence
\[
\mu(T,T)=1,
\qquad
\mu(T,S_3)=-1,
\]
and therefore
\[
\begin{aligned}
\frac1{|T|}
&=
\mu(T,T)b_{S_3}(T)
+
\mu(T,S_3)b_{S_3}(S_3)\\
&=
\frac23-\frac16\\
&=
\frac12.
\end{aligned}
\]

Similarly,
\[
\mu(C,C)=1,
\qquad
\mu(C,S_3)=-1,
\]
and hence
\[
\begin{aligned}
\frac1{|C|}
&=
\mu(C,C)b_{S_3}(C)
+
\mu(C,S_3)b_{S_3}(S_3)\\
&=
\frac12-\frac16\\
&=
\frac13.
\end{aligned}
\]

Finally,
\[
\mu(1,1)=1,
\qquad
\mu(1,T_i)=-1,
\qquad
\mu(1,C)=-1,
\qquad
\mu(1,S_3)=3,
\]
where $T_1,T_2,T_3$ are the three subgroups of order $2$. Thus
\[
\begin{aligned}
\frac1{|1|}
&=
b_{S_3}(1)
-
\sum_{i=1}^{3}b_{S_3}(T_i)
-
b_{S_3}(C)
+
b_{S_3}(S_3)\\
&=
3-3\cdot\frac23-\frac12+\frac16\\
&=
1.
\end{aligned}
\]

\subsection{A probabilistic interpretation of the coefficients}
\label{subsec:probabilistic-interpretation}

The coefficients $c_G(g)$ also admit a natural probabilistic
interpretation. After normalization, they describe the distribution
obtained by first choosing a subgroup uniformly at random and then
choosing an element uniformly from that subgroup.

\begin{proposition}\label{prop:sum-cG}
Let $G$ be a finite group. Then
\[
\sum_{g\in G}c_G(g)
=
|\mathcal{H}(G)|,
\]
where $\mathcal{H}(G)$ denotes the set of all subgroups of $G$.
\end{proposition}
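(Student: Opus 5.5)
The plan is to insert the defining expression \eqref{eq:coefficient} for $c_G(g)$ and then swap the order of the two finite sums. This is the same double-counting over incidence pairs that was used in the proof of the element-wise expansion, read in the opposite direction. Concretely, I will write
\[
\sum_{g\in G}c_G(g)
=
\sum_{g\in G}\sum_{\substack{H\leq G\\ g\in H}}\frac{1}{|H|}
=
\sum_{\substack{(g,H)\\ H\leq G,\ g\in H}}\frac{1}{|H|}.
\]
This is a finite sum over the set of incidence pairs $(g,H)$ with $g\in H\leq G$. Because $G$ is finite, both the group and the subgroup lattice $\mathcal{H}(G)$ are finite, so the rearrangement needs no further justification.

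Next I will regroup the pairs according to the subgroup $H$ rather than the element $g$:
\[
\sum_{\substack{(g,H)\\ H\leq G,\ g\in H}}\frac{1}{|H|}
=
\sum_{H\leq G}\sum_{g\in H}\frac{1}{|H|}
=
\sum_{H\leq G}\frac{|H|}{|H|}
=
\sum_{H\leq G}1
=
|\mathcal{H}(G)|.
\]
This is the claimed identity. One can also see it more conceptually: it is the specialization of the element-wise expansion (Proposition on element-wise expansion) in which every $P_{\Gamma/h}$ is replaced by the constant $1$. In that case each orbital average $OP_{\Gamma,H}$ equals $1$, so the subgroup-lattice sum counts subgroups. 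I may mention this as a remark, but the direct interchange is self-contained and independent of any graph.

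There is no real obstacle here. The only point requiring care is the bookkeeping in the interchange of summation. Each pair $(g,H)$ with $g\in H$ must be counted exactly once on each side. For fixed $H$, the inner sum runs over exactly $|H|$ elements, which cancels the weight $1/|H|$. With this proposition established, the subsequent normalization $c_G(g)/|\mathcal{H}(G)|$ becomes a probability distribution on $G$. It corresponds to choosing $H$ uniformly from $\mathcal{H}(G)$ and then choosing $g$ uniformly from $H$.
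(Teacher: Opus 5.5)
Your proposal is correct and is essentially the paper's proof. Both substitute the definition of $c_G(g)$, interchange the two finite sums to regroup by subgroup, and use that each $H$ contributes $|H|\cdot\frac{1}{|H|}=1$, which gives $|\mathcal{H}(G)|$.
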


\begin{proof}
By definition,
\[
c_G(g)
=
\sum_{\substack{H\leq G\\g\in H}}
\frac1{|H|}.
\]
Therefore
\[
\begin{aligned}
\sum_{g\in G}c_G(g)
&=
\sum_{g\in G}
\sum_{\substack{H\leq G\\g\in H}}
\frac1{|H|}\\
&=
\sum_{H\leq G}
\frac1{|H|}
\sum_{g\in H}1\\
&=
\sum_{H\leq G}1\\
&=
|\mathcal{H}(G)|.
\end{aligned}
\]
\end{proof}

It follows that
\[
\widehat{c}_G(g)
:=
\frac{c_G(g)}{|\mathcal{H}(G)|}
\]
defines a probability distribution on $G$.

\begin{theorem}[Probabilistic interpretation]
\label{thm:probabilistic}
Let $G$ be a finite group. Consider the following random experiment:
\begin{enumerate}
    \item choose a subgroup $H\leq G$ uniformly at random from
    $\mathcal{H}(G)$;
    \item conditional on $H$, choose an element $g\in H$ uniformly at
    random.
\end{enumerate}
Then the resulting probability distribution on $G$ is
\[
\mathbb{P}(g)
=
\widehat{c}_G(g)
=
\frac{c_G(g)}{|\mathcal{H}(G)|}.
\]

Consequently, if
\[
\widehat{F}_{\Gamma}(G;x,y)
:=
\frac{1}{|\mathcal{H}(G)|}
F_{\Gamma}(G;x,y),
\]
then
\[
\widehat{F}_{\Gamma}(G;x,y)
=
\sum_{g\in G}
\widehat{c}_G(g)P_{\Gamma/g}(x,y)
=
\mathbb{E}\!\left[P_{\Gamma/g}(x,y)\right].
\]
\end{theorem}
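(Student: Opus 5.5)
The plan is to compute the marginal distribution of the two-stage experiment directly, using the law of total probability, and then to identify the result with the normalized coefficient by means of the definition \eqref{eq:coefficient} and Proposition~\ref{prop:sum-cG}. For a fixed $g\in G$, I partition the event that the element $g$ is drawn according to which subgroup $H$ was chosen in the first stage. Since $\mathcal{H}(G)$ is finite and the first stage is uniform, each subgroup is chosen with probability $1/|\mathcal{H}(G)|$. Conditional on $H$, the element $g$ is chosen with probability $1/|H|$ if $g\in H$ and with probability $0$ otherwise. Hence
\[
\mathbb{P}(g)
=
\sum_{H\leq G}\frac{1}{|\mathcal{H}(G)|}\cdot\frac{\mathbf{1}[g\in H]}{|H|}
=
\frac{1}{|\mathcal{H}(G)|}\sum_{\substack{H\leq G\\ g\in H}}\frac{1}{|H|}
=
\frac{c_G(g)}{|\mathcal{H}(G)|},
\]
which is $\widehat{c}_G(g)$.

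As a consistency check, Proposition~\ref{prop:sum-cG} shows that these values sum to $1$, so $\widehat{c}_G$ is indeed the full law of the sampled element. No mass is lost, because every subgroup is nonempty; in particular the trivial subgroup contains $e$.

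For the second claim, I divide the element-wise expansion \eqref{eq:coefficient-expansion} (Proposition on the element-wise expansion) by $|\mathcal{H}(G)|$. This gives
\[
\widehat{F}_{\Gamma}(G;x,y)=\sum_{g\in G}\widehat{c}_G(g)P_{\Gamma/g}(x,y).
\]
The right-hand side is, by the definition of expectation of a function of a discrete random variable, $\mathbb{E}[P_{\Gamma/g}(x,y)]$, where $g$ has law $\widehat{c}_G$. The expectation is taken coefficientwise in the polynomial ring, or pointwise for fixed numerical $(x,y)$; both readings agree by linearity.

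There is no real obstacle here. The only point needing care is to make the random variable explicit, namely that $g$ in $\mathbb{E}[P_{\Gamma/g}]$ denotes the output of the two-stage experiment. Alternatively, one can compute $\mathbb{E}[P_{\Gamma/g}]$ by conditioning on $H$: this gives $\frac{1}{|\mathcal{H}(G)|}\sum_{H}OP_{\Gamma,H}$, and that equals $\widehat{F}_\Gamma$ directly by \eqref{eq:multi-orbital}.
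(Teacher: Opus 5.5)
Your proposal is correct and follows essentially the same route as the paper: you apply the law of total probability over the uniformly chosen subgroup to get $\mathbb{P}(g)=c_G(g)/|\mathcal{H}(G)|$, and then divide the element-wise expansion by $|\mathcal{H}(G)|$ to obtain the expectation formula. Your extra remarks are correct but not needed for the argument; these are the normalization check via Proposition~\ref{prop:sum-cG} and the alternative computation by conditioning on $H$.
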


\begin{proof}
For a fixed $g\in G$, the probability that the experiment produces $g$
is
\[
\begin{aligned}
\mathbb{P}(g)
&=
\sum_{\substack{H\leq G\\g\in H}}
\mathbb{P}(H)\mathbb{P}(g\mid H)\\
&=
\sum_{\substack{H\leq G\\g\in H}}
\frac{1}{|\mathcal{H}(G)|}\frac1{|H|}\\
&=
\frac{c_G(g)}{|\mathcal{H}(G)|}\\
&=
\widehat{c}_G(g).
\end{aligned}
\]

Using the element-wise expansion,
\[
F_{\Gamma}(G;x,y)
=
\sum_{g\in G}
c_G(g)P_{\Gamma/g}(x,y),
\]
and dividing by $|\mathcal{H}(G)|$ gives
\[
\widehat{F}_{\Gamma}(G;x,y)
=
\sum_{g\in G}
\widehat{c}_G(g)P_{\Gamma/g}(x,y),
\]
which is the corresponding expectation.
\end{proof}

\begin{remark}
The distribution $\widehat{c}_G$ is generally not uniform on $G$.
Elements contained in many subgroups receive larger weights. In
particular,
\[
\widehat{c}_G(1)
=
\frac{1}{|\mathcal{H}(G)|}
\sum_{H\leq G}\frac1{|H|}
\]
is maximal among the probabilities $\widehat{c}_G(g)$.
\end{remark}

The normalization therefore gives the multiorbital polynomial the
interpretation
\[
F_{\Gamma}(G;x,y)
=
|\mathcal{H}(G)|
\,
\mathbb{E}\!\left[P_{\Gamma/g}(x,y)\right],
\]
where $g$ is generated by the two-stage random experiment above.

\subsubsection{Example: The induced distribution for $S_3$}

The group $S_3$ has six subgroups: the trivial subgroup, three subgroups
of order $2$, one subgroup of order $3$, and $S_3$ itself. Thus
\[
|\mathcal{H}(S_3)|=6.
\]

The coefficient values are
\[
c_{S_3}(e)=3,
\qquad
c_{S_3}(t)=\frac23
\]
for every transposition $t$, and
\[
c_{S_3}(c)=\frac12
\]
for every $3$-cycle $c$.

Consequently,
\[
\widehat c_{S_3}(e)
=
\frac12,
\]
\[
\widehat c_{S_3}(t)
=
\frac19
\qquad
\text{for every transposition }t,
\]
and
\[
\widehat c_{S_3}(c)
=
\frac1{12}
\qquad
\text{for every $3$-cycle }c.
\]

Indeed,
\[
\frac12
+
3\cdot\frac19
+
2\cdot\frac1{12}
=
1.
\]

Thus, for a graph $\Gamma$ equipped with an $S_3$-action,
\[
\widehat F_{\Gamma}(S_3;x,y)
=
\frac12P_{\Gamma/e}(x,y)
+
\frac13P_{\Gamma/t}(x,y)
+
\frac16P_{\Gamma/c}(x,y),
\]
where $t$ denotes any transposition and $c$ any $3$-cycle.

The factors $\frac13$ and $\frac16$ arise because there are three
transpositions and two $3$-cycles, respectively. Equivalently,
\[
\widehat F_{\Gamma}(S_3;x,y)
=
\mathbb{E}\!\left[P_{\Gamma/g}(x,y)\right]
\]
for the probability distribution above.

\subsection{Connections to classical invariants}

The multiorbital polynomial admits several natural specializations.

\begin{proposition}[Chromatic specialization]
Assume that the bivariate chromatic polynomial satisfies
\[
P_{\Lambda}(x,x)
=
\chi_{\Lambda}(x)
\]
for every graph $\Lambda$ under consideration. Then
\[
F_{\Gamma}(G;x,x)
=
\sum_{H\leq G}
\frac1{|H|}
\sum_{h\in H}
\chi_{\Gamma/h}(x).
\]
Equivalently,
\[
F_{\Gamma}(G;x,x)
=
\sum_{g\in G}
c_G(g)\chi_{\Gamma/g}(x).
\]
\end{proposition}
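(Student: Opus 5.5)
The plan is to substitute $y=x$ directly into the defining formula and use the hypothesis $P_{\Lambda}(x,x)=\chi_{\Lambda}(x)$ term by term. Start from the expanded definition \eqref{eq:multi-orbital-expanded}. It expresses $F_{\Gamma}(G;x,y)$ as a finite $\mathbb{Q}$-linear combination of the quotient polynomials $P_{\Gamma/h}(x,y)$, with coefficients $1/|H|$ that do not depend on $x$ or $y$. Specialization $y\mapsto x$ is a ring homomorphism $\mathbb{Q}[x,y]\to\mathbb{Q}[x]$, so it commutes with these finite sums and scalar multiples. This gives
\[
F_{\Gamma}(G;x,x)=\sum_{H\leq G}\frac1{|H|}\sum_{h\in H}P_{\Gamma/h}(x,x).
\]
The assumption, applied to each quotient graph $\Lambda=\Gamma/h$, replaces every $P_{\Gamma/h}(x,x)$ by $\chi_{\Gamma/h}(x)$, which is the first identity.

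For the second identity, apply the same specialization to the element-wise expansion \eqref{eq:coefficient-expansion} from the Element-wise expansion proposition. Because $c_G(g)$ is a rational constant, this yields $\sum_{g\in G}c_G(g)P_{\Gamma/g}(x,x)$, and the hypothesis converts this into $\sum_{g\in G}c_G(g)\chi_{\Gamma/g}(x)$. Alternatively, one can interchange the two finite sums in the first identity exactly as in the proof of that proposition, with $\chi$ in place of $P$.

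The only point requiring care is that the hypothesis must apply to the quotient graphs $\Gamma/h$, which may carry loops. These are exactly the graphs "under consideration" in the statement. If $\Gamma/h$ has a loop, both sides of $P_{\Lambda}(x,x)=\chi_{\Lambda}(x)$ vanish under the usual conventions: a looped vertex admits no proper colour, and the chromatic polynomial of a looped graph is $0$. Such terms therefore contribute consistently on both sides. The hypothesis is stated as an assumption, so beyond this remark nothing further needs to be verified, and the argument is purely formal.
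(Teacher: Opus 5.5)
Your proposal is correct and matches the paper's proof: substitute $y=x$ into the expanded definition, apply the hypothesis to each quotient $\Gamma/h$, and obtain the second identity from the element-wise expansion. Your remark on looped quotients is accurate, since both $P_{\Lambda}(x,x)$ and $\chi_{\Lambda}(x)$ vanish there, but it is not needed because the identity $P_{\Lambda}(x,x)=\chi_{\Lambda}(x)$ is assumed.
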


\begin{proof}
Substituting $y=x$ into
\eqref{eq:multi-orbital-expanded} gives
\[
F_{\Gamma}(G;x,x)
=
\sum_{H\leq G}
\frac1{|H|}
\sum_{h\in H}
P_{\Gamma/h}(x,x).
\]
The assumed specialization yields
\[
P_{\Gamma/h}(x,x)
=
\chi_{\Gamma/h}(x),
\]
which proves the first identity. The second follows from
\eqref{eq:coefficient-expansion}.
\end{proof}

\begin{remark}
The specialization is generally not
\[
\sum_{H\leq G}\chi_{\Gamma/h}(x).
\]
The subgroup-indexed polynomial $OP_{\Gamma,H}$ is an average over the
individual elements of $H$, and this averaging remains present after the
specialization $y=x$.
\end{remark}

\begin{proposition}[Edgeless graphs and a weighted cycle index]
Let $\Gamma$ be an edgeless graph on $n$ vertices, and suppose that $G$
acts on $V(\Gamma)$. Then, for every $g\in G$, the quotient $\Gamma/g$
is edgeless and has
\[
\#\operatorname{Orb}_{\langle g\rangle}(V(\Gamma))
\]
vertices. Hence
\[
P_{\Gamma/g}(x,y)
=
x^{\,\#\operatorname{Orb}_{\langle g\rangle}(V(\Gamma))}.
\]
Consequently,
\begin{equation}\label{eq:weighted-cycle-index}
F_{\Gamma}(G;x,y)
=
\sum_{g\in G}
c_G(g)\,
x^{\,\#\operatorname{Orb}_{\langle g\rangle}(V(\Gamma))}.
\end{equation}
\end{proposition}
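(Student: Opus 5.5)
The proof has three steps: identify the quotient graph, evaluate its bivariate polynomial, and substitute into the element-wise expansion.

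\emph{Step 1: the quotient.} Since $\Gamma$ has no edges, no edge of $\Gamma$ can join two orbits or lie inside a single orbit. So for each $g\in G$ the quotient $\Gamma/g$ has neither edges between distinct orbit-vertices nor loops. Its vertex set is, by definition, the set of $\langle g\rangle$-orbits on $V(\Gamma)$. Hence $\Gamma/g$ is the edgeless graph on $m_g:=\#\operatorname{Orb}_{\langle g\rangle}(V(\Gamma))$ vertices.

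\emph{Step 2: the polynomial of an edgeless graph.} I would use the defining colouring interpretation of the bivariate chromatic polynomial of Dohmen, P\"onitz and Tittmann. For positive integers $x\ge y$, $P_\Lambda(x,y)$ counts maps $V(\Lambda)\to\{1,\dots,x\}$ in which adjacent vertices never share a colour from $\{1,\dots,y\}$. With no edges (and no loops) there is no constraint, so every one of the $x^{m_g}$ maps is admissible. Two polynomials in $x,y$ that agree on all such integer pairs are equal, so $P_{\Gamma/g}(x,y)=x^{m_g}$ identically. Alternatively, one can use multiplicativity over disjoint unions, as in the diagonal multiplicativity discussion, together with $P_{K_1}(x,y)=x$.

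\emph{Step 3: substitution.} Inserting $P_{\Gamma/g}(x,y)=x^{m_g}$ into the element-wise expansion \eqref{eq:coefficient-expansion} gives \eqref{eq:weighted-cycle-index} directly.

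The only point needing care is Step 2. It must be checked that the convention in use, that of Dohmen and Lange-Geisler, gives $P=x^{m}$ for edgeless graphs, i.e.\ that isolated vertices contribute a factor $x$ and the polynomial is not normalised differently. This holds because the result depends only on the colouring count, and that count is unconstrained when there are no edges. The rest is formal.
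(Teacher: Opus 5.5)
Your proof is correct and follows the paper's argument: the quotient of an edgeless graph is edgeless (no edges and no loops) on the set of $\langle g\rangle$-orbits, an edgeless graph on $m$ vertices has bivariate chromatic polynomial $x^{m}$, and substituting this into the element-wise expansion gives the weighted cycle-index formula. The only difference is that you justify $P=x^{m}$ via the Dohmen--P\"onitz--Tittmann colouring count and a polynomial-identity argument, a step the paper simply asserts.
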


\begin{proof}
Since $\Gamma$ is edgeless, every quotient $\Gamma/g$ is also edgeless.
Its vertices are precisely the $\langle g\rangle$-orbits on
$V(\Gamma)$. An edgeless graph on $m$ vertices has bivariate chromatic
polynomial $x^m$. Therefore
\[
P_{\Gamma/g}(x,y)
=
x^{\,\#\operatorname{Orb}_{\langle g\rangle}(V(\Gamma))}.
\]
Substitution into \eqref{eq:coefficient-expansion} gives
\eqref{eq:weighted-cycle-index}.
\end{proof}

\begin{remark}
If $G$ is a permutation group acting on $n$ vertices, then
\[
\#\operatorname{Orb}_{\langle g\rangle}(V(\Gamma))
\]
is the number of cycles of the permutation $g$. Thus
\eqref{eq:weighted-cycle-index} becomes
\[
F_{\Gamma}(G;x,y)
=
\sum_{g\in G}
c_G(g)x^{\#\operatorname{cycles}(g)}.
\]
This has the form of a weighted cycle-index expression. It should not be
identified with the classical cycle index, since the weights $c_G(g)$
are determined by the subgroup lattice rather than by the uniform
factor $1/|G|$.
\end{remark}

\section*{Conclusion}

We introduced the multiorbital bivariate chromatic polynomial
\[
F_{\Gamma}(G;x,y)
=
\sum_{H\leq G}
\frac{1}{|H|}
\sum_{h\in H}
P_{\Gamma/h}(x,y),
\]
which aggregates the orbital bivariate chromatic polynomials over the
subgroup lattice of the acting group.

The central structural identity is the element-wise expansion
\[
F_{\Gamma}(G;x,y)
=
\sum_{g\in G}
c_G(g)P_{\Gamma/g}(x,y),
\]
where
\[
c_G(g)
=
\sum_{\substack{H\leq G\\g\in H}}
\frac1{|H|}.
\]
The coefficient depends only on the cyclic subgroup $\langle g\rangle$,
which leads to a decomposition by cyclic subgroups. Since the coefficient
function is also constant on conjugacy classes, the polynomial admits a
corresponding conjugacy-class decomposition.

The subgroup-lattice structure of the coefficients is captured naturally
by Möbius inversion. In addition, normalization by the number of subgroups
gives a probability distribution on $G$. Under this distribution, the
normalized multiorbital polynomial is the expected quotient polynomial
associated with the resulting random group element.

We further established a diagonal multiplicativity relation for disjoint
unions preserved by the group action and obtained a weighted cycle-index
expression for edgeless graphs. These properties demonstrate how the
construction simultaneously reflects the subgroup lattice of the acting
group, the cyclic and conjugacy structure of its elements, and the quotient
graphs induced by the group action.



\begin{thebibliography}{99}

\bibitem{AverbouchGodlinMakowsky2010}
I. Averbouch, B. Godlin, and J. A. Makowsky,
\emph{An extension of the bivariate chromatic polynomial},
European Journal of Combinatorics, 31(1) (2010), 1--17.
doi:10.1016/j.ejc.2009.05.006.

\bibitem{Birkhoff1912}
G. D. Birkhoff,
\emph{A Determinant Formula for the Number of Ways of Coloring a Map},
Annals of Mathematics, 14(1) (1912), 42--46.

\bibitem{CameronKayibi2007}
P. J. Cameron and K. K. Kayibi,
\emph{Orbital Chromatic and Flow Roots},
Combinatorics, Probability and Computing, 16(3) (2007), 401--407.
doi:10.1017/S0963548306008200.

\bibitem{CameronJacksonRudd2008}
P. J. Cameron, B. Jackson, and J. D. Rudd,
\emph{Orbit-counting polynomials for graphs and codes},
Discrete Mathematics, 308(5--6) (2008), 920--930.
doi:10.1016/j.disc.2007.07.108.

\bibitem{DohmenLangeGeisler2026}
K. Dohmen and M. Lange-Geisler,
\emph{The Orbital Bivariate Chromatic Polynomial},
arXiv:2009.08235 [math.CO], version 5, 2026.
\url{https://arxiv.org/abs/2009.08235}

\bibitem{DohmenPoenitzTittmann2003}
K. Dohmen, A. Pönitz, and P. Tittmann,
\emph{A new two-variable generalization of the chromatic polynomial},
Discrete Mathematics and Theoretical Computer Science, 6(1) (2003),
69--90.
doi:10.46298/dmtcs.335.

\bibitem{Klass1976}
M. J. Klass,
\emph{A generalization of Burnside's combinatorial lemma},
Journal of Combinatorial Theory, Series A, 20(3) (1976), 273--278.
doi:10.1016/0097-3165(76)90021-2.

\end{thebibliography}
\end{document}